\newcommand\g{{\mathfrak g}}
\newcommand\h{{\mathfrak h}}
\renewcommand{\b}{\mathfrak{b}}
\renewcommand{\a}{\mathfrak{a}}
\newcommand\m{\mathfrak m}
\newcommand\n{\mathfrak n}
\newcommand\q{\mathfrak q}
\renewcommand\k{\mathfrak k}
\renewcommand{\t}{\mathfrak{t}}
\newcommand\Aut{\operatorname{Aut}}
\newcommand\Hom{\operatorname{Hom}}
\newcommand\Gr{\operatorname{Gr}}
\newcommand\K{{\mathbb{K}}}
\newcommand\X{\mathfrak X}
\newcommand\Q{\mathbb Q}
\newcommand\D{\mathcal D}
\newcommand\Z{\mathbb Z}
\newcommand\V{\mathcal V}
\newcommand\Rad{\operatorname{R}}
\newcommand\ord{\operatorname{ord}}
\newcommand\GL{\mathop{\rm GL}\nolimits}
\newcommand\SL{\mathop{\rm SL}\nolimits}
\newcommand\SO{\mathop{\rm SO}\nolimits}
\newcommand\LV{\mathcal{LV}}
\newtheorem{Thm}{Theorem}
\newtheorem{Prop}{Proposition}
\newtheorem{Conj}{Conjecture}
\theoremstyle{definition}
\newtheorem{Ex}[Prop]{Example}
\newtheorem{defi}[Prop]{Definition}
\newtheorem{Rem}[Prop]{Remark}
\newtheorem{Exer}{Exercise}
\numberwithin{equation}{section}
\author{Ivan  Losev}
\title{Uniqueness properties for spherical varieties}
\thanks{MIT, Department of Mathematics, 77 Massachusetts Avenue, Cambridge MA 02139, USA}\thanks{ e-mail: ivanlosev@math.mit.edu}
\thanks{{\it Key words and phrases}:
reductive groups, spherical varieties,  combinatorial invariants}
\thanks{{\it 2000 Mathematics Subject Classification.} 14M17}
\begin{document}
\begin{abstract}
The goal of these lectures is to explain speaker's results on uniqueness properties of spherical varieties.
By a uniqueness property we mean the following. Consider some special class of spherical varieties. Define
some combinatorial invariants for spherical varieties from  this class. The problem is to determine whether
this set of invariants specifies a spherical variety in this class uniquely (up to an isomorphism). We are interested
in three classes: smooth affine varieties, general affine varieties, and homogeneous spaces.
\end{abstract}
\maketitle
\tableofcontents
\section{Main results}
First of all, let us fix some notation. Throughout the notes, the base field $\K$ is algebraically closed
and of characteristic 0. Let $G$  denote a connected reductive algebraic group,  $B$ a Borel subgroup
in $G$, and  $T$ a maximal torus in $B$. Then the character lattices $\X(T),\X(B)$ of $T$ and $B$, respectively,
are canonically identified. Let $\g,\b,\t$ be the corresponding Lie algebras.

General goal: given some class $C$ of spherical $G$-varieties, establish some combinatorial invariants of a variety
in $C$ such that this variety is uniquely determined by its combinatorial invariants.

Perhaps, a combinatorial invariant, which is the easiest to define, is the {\it weight monoid}.
Let $X$ be a spherical $G$-variety.
Then $\K[X]$ is a multiplicity free $G$-module, that is, the
multiplicity of every irreducible module in $\K[X]$  is at most 1.
By the {\it weight monoid} $\X^+_{G,X}$  of $X$ we mean the set of all highest weights of the $G$-module $\K[X]$.
\begin{Exer}
Using the fact that $\K[X]$ is an integral domain check that $\X^+_{G,X}$ is indeed a submonoid
in $\X(T)$.
\end{Exer}

The following result was conjectured by Knop in the middle of 90's.

\begin{Thm}[\cite{Knop_conj}]\label{Thm:2}  Let $X_1,X_2$ be  {\rm smooth}   affine
spherical varieties with $\X^+_{G,X_1}=\X^+_{G,X_2}$. Then $X_1,X_2$ are $G$-equivariantly isomorphic.
\end{Thm}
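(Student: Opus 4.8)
The plan is to realize $X_1$ and $X_2$ as points of a single moduli space and to exploit a torus action on it. Write $\Lambda=\X^+_{G,X_1}=\X^+_{G,X_2}$ and fix, as a $G$-module, $A=\bigoplus_{\lambda\in\Lambda}V(\lambda)$, so that $\K[X_1]\cong A\cong\K[X_2]$ as $G$-modules; the only thing distinguishing $X_1$ from $X_2$ is the commutative $G$-algebra structure carried by $A$. Following Alexeev and Brion, these algebra structures are parametrized by the invariant Hilbert scheme $M=\mathrm{Hilb}^G_h(V)$ attached to the Hilbert function $h=\mathbf{1}_\Lambda$: after choosing a finite generating set $\Pi$ of $\Lambda$ and putting $V=\bigoplus_{\lambda\in\Pi}V(\lambda)^*$, a point of $M$ is a $G$-stable ideal $I\subset\K[V]$ with $\K[V]/I\cong A$. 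Thus $[X_1],[X_2]\in M$, and the theorem becomes a statement about these two points.

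The second ingredient is the torus $T_\Pi=\Aut^G(V)\cong(\K^\times)^{\Pi}$ rescaling the isotypic components of $V$; it acts on $M$, and since each of its elements is a $G$-automorphism of $V$, points lying in one $T_\Pi$-orbit parametrize $G$-isomorphic varieties. Hence it suffices to put $[X_1]$ and $[X_2]$ in one $T_\Pi$-orbit. Concretely, $T_\Pi$ rescales the ``lower'' structure constants of the multiplication $V(\lambda)\otimes V(\mu)\to\bigoplus_\nu V(\nu)$ (those with $\nu\prec\lambda+\mu$ in the dominance order), leaving the Cartan component $\nu=\lambda+\mu$ fixed. A dominant one-parameter subgroup $\gamma\colon\K^\times\to T_\Pi$ therefore contracts each $[X_i]$, as $t\to0$, to the point $[X_0]$ at which all lower constants vanish; this is the horospherical variety with weight monoid $\Lambda$, which exists and is unique. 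So $[X_0]$ is a distinguished fixed point lying in the closure of every orbit we care about.

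The crux is the local analysis of $M$ at a point coming from a \emph{smooth} variety. I would compute the tangent space $T_{[X_i]}M=\Hom^G_{\K[X_i]}(I/I^2,\K[X_i])$, the space of $G$-invariant first-order deformations of the multiplication, and prove that for smooth $X_i$ it is spanned by the directions tangent to the $T_\Pi$-orbit; equivalently, $[X_i]$ is a smooth point of $M$ at which $T_\Pi$ acts with an \emph{open} orbit. For this I would apply Luna's local structure theorem, pass to an \'etale slice at a point of the open $G$-orbit $G/H\subset X_i$, and reduce the invariant deformation space to an explicit computation with the slice representation of the reductive part of the stabilizer. Smoothness of $X_i$ makes this slice representation a genuine reduced linear representation with unobstructed invariant deformations, and I expect this to force every deformation weight to be one already realized by the torus action. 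This is the essential place where the word ``smooth'' is used, and where the generally singular and reducible Alexeev--Brion scheme is tamed; the same computation should show that $M$ is smooth at $[X_0]$, so that $[X_0]$ lies on a unique irreducible component.

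Granting the local statement, the conclusion is formal. Each smooth $[X_i]$ is a smooth point of $M$ whose $T_\Pi$-orbit $\Omega_i$ is open; by the horospherical contraction $[X_0]\in\overline{\Omega_1}\cap\overline{\Omega_2}$, and $\gamma$-stability of the irreducible components together with unibranchness of $M$ at $[X_0]$ shows that $\Omega_1$ and $\Omega_2$ lie in one and the same irreducible component of $M$. Being open orbits in a single irreducible variety, $\Omega_1$ and $\Omega_2$ meet, hence coincide, so $[X_1]$ and $[X_2]$ are $T_\Pi$-conjugate and $X_1\cong X_2$ as $G$-varieties. The main obstacle is precisely the tangent-space computation of the previous paragraph: converting the global combinatorial datum (equality of weight monoids) into the local geometric input (the slice representation) and showing that smoothness annihilates all invariant deformation directions transverse to the torus orbit is the real content of Knop's conjecture.
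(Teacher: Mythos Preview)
Your strategy is genuinely different from the paper's. The paper does \emph{not} work on the Alexeev--Brion invariant Hilbert scheme at all; instead it reduces Theorem~\ref{Thm:2} to Theorem~\ref{Thm:3} (the homogeneous case). Concretely, it first establishes the combinatorial claims (*) and (**) --- that $\X^+_{G,X_1}=\X^+_{G,X_2}$ forces $\D_{G,X_1}=\D_{G,X_2}$ and $\Psi_{G,X_1}=\Psi_{G,X_2}$ --- via repeated applications of the local structure theorem (Theorem~\ref{Thm:loc_struc}) and a delicate case analysis of ``hidden'' divisors. Once the open orbits $X_1^0,X_2^0$ are known to share all basic invariants, Theorem~\ref{Thm:3} gives $X_1^0\cong X_2^0$, and then $\K[X_1]=\K[X_2]$ inside $\K[X_1^0]$. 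So the paper trades your deformation-theoretic input for combinatorics plus the (separately proved, and itself highly nontrivial) homogeneous uniqueness theorem.

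Your moduli approach is a reasonable and well-known heuristic, but as written it has a real gap at exactly the point you flag. The assertion that for smooth $X_i$ the $T_\Pi$-orbit through $[X_i]$ is open in $M$ --- equivalently, that every $G$-invariant first-order deformation of $\K[X_i]$ is induced by rescaling isotypic components --- is the entire content of the conjecture, and you have only said you ``expect'' the slice computation to yield it. No such direct computation is known to suffice; indeed, the existing proofs that the invariant Hilbert scheme has good local structure at such points \emph{use} Losev's theorem rather than reprove it. Moreover, your claim that ``the same computation should show that $M$ is smooth at $[X_0]$'' cannot be right as stated: the horospherical degeneration $X_0$ is typically singular, so the hypothesis you are invoking (smoothness of the variety) fails there, and in fact $M$ is known to be reducible and singular at the horospherical point in general. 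Without smoothness or unibranchness at $[X_0]$, the step ``$[X_0]\in\overline{\Omega_1}\cap\overline{\Omega_2}$ so $\Omega_1,\Omega_2$ lie on one component'' does not go through.
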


Note that if $G=T$, then the claim of Theorem \ref{Thm:1} holds even without the smoothness assumption.
Indeed, in this case $\K[X_i]\cong \K[\X^+_{G,X_i}]$.

However, for $G\neq T$ the theorem no longer holds if we omit the smoothness condition.  Indeed, consider the
tautological $G:=\SO(3)$-module $\K^3$. Let $q$ be a $G$-invariant  non-degenerate quadratic form
on $\K^3$. Consider $q$ as a map $\K^3\rightarrow \K$ and let $X_0,X_1$ be the fibers of 0 and 1,
respectively.
\begin{Exer}
Show that $\K[X_0]\cong \K[X_1]$ as $G$-modules and, more precisely, that any $G$-module occurs both
in $\K[X_0],\K[X_1]$ with multiplicity 1.
\end{Exer}
But, of course, $X_0,X_1$ are not isomorphic
as algebraic varieties, for $X_1$ is smooth, but $X_0$ is not.

To remedy the situation one needs to consider a more subtle invariant of spherical varieties:
the valuation cone, see \cite{Knop5}, Corollary 1.8 and Lemma 5.1, or \cite{Timashev},
Section 15 and 21. We denote the valuation cone of a spherical
$G$-variety $X$ by $\V_{G,X}$.

\begin{Thm}[\cite{Knop_conj}]\label{Thm:1}
Let $X_1,X_2$ be two affine spherical $G$-varieties such that $\X^+_{G,X_1}=\X^+_{G,X_2}$
and $\V_{G,X_1}=\V_{G,X_2}$. Then $X_1,X_2$ are $G$-equivariantly isomorphic.
\end{Thm}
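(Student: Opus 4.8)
The plan is to compare the two $G$-algebras $\K[X_1]$ and $\K[X_2]$ directly. Write $\Gamma := \X^+_{G,X_1} = \X^+_{G,X_2}$ for the common weight monoid. Since each $X_i$ is affine and spherical, $\K[X_i]$ is multiplicity free, so as a $G$-module $\K[X_i] \cong \bigoplus_{\lambda \in \Gamma} V(\lambda)$, where $V(\lambda)$ denotes the irreducible constituent of highest weight $\lambda$; in particular $\K[X_1]$ and $\K[X_2]$ are already isomorphic as $G$-modules, and the whole problem is to match the two multiplication laws. I would first record the leading term of the multiplication: for $\lambda,\mu \in \Gamma$ the product of the $\lambda$- and $\mu$-isotypic components lands in $\bigoplus_{\nu} V(\nu)$ with $\nu \preceq \lambda+\mu$ in the dominance order, and the top component $V(\lambda+\mu)$ always occurs with a nonzero (``Cartan'') coefficient because the product of two $B$-semiinvariants is again a nonzero semiinvariant in the domain $\K[X_i]$. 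Collecting only these top components produces the horospherical degeneration $X_0$ of $X_i$, which depends on $\Gamma$ alone; thus $X_1,X_2$ share the same horospherical degeneration and differ only through the strictly lower-order structure constants.

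Next I would organize these lower-order terms via the adjoint torus $T_{\mathrm{ad}} = T/Z(G)$. Acting on $\K[X_i]$ by rescaling each isotypic component $V(\lambda)$ by $t^{-\lambda}$ (for $t \in T_{\mathrm{ad}}$) rescales the structure constant carrying $V(\lambda) \otimes V(\mu) \to V(\nu)$ by $t^{\lambda+\mu-\nu}$. Hence the space of multiplication laws with fixed weight monoid $\Gamma$ carries a natural $T_{\mathrm{ad}}$-action (the Alexeev--Brion moduli scheme $M_\Gamma$ of affine spherical $G$-varieties with weight monoid $\Gamma$), and two of its points give $G$-equivariantly isomorphic varieties precisely when they lie in one $T_{\mathrm{ad}}$-orbit. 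Under this action $X_0$ is the unique $T_{\mathrm{ad}}$-fixed point, lying in the closure of every orbit, and each nonzero lower-order structure constant sits in a $T_{\mathrm{ad}}$-weight space whose weight $\lambda+\mu-\nu$ is a positive combination of positive roots. The essential structural input is that for spherical varieties the weights that actually switch on are exactly the \emph{spherical roots} of $X_i$, forming a linearly independent set $\Sigma \subseteq \X(T)$.

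The third step is to read off $\Sigma$ from the valuation cone. By construction $\V_{G,X_i}$ is a cosimplicial cone in $\Hom(\Lambda,\Q)$ — the antidominant chamber of the little Weyl group generated by the reflections in $\Sigma$ — whose walls are the hyperplanes orthogonal to the spherical roots. Consequently $\V_{G,X_1} = \V_{G,X_2}$ forces the two spherical-root sets to coincide, $\Sigma_1 = \Sigma_2 =: \Sigma$, so the \emph{support} of the lower-order structure constants, i.e. which $T_{\mathrm{ad}}$-weight spaces are nonzero, is the same for $X_1$ and $X_2$. Finally I would conclude by rescaling: since the spherical roots in $\Sigma$ are linearly independent characters of $T_{\mathrm{ad}}$, the torus can independently rescale the constants attached to distinct weights of $\Sigma$, and once $\Gamma$ and $\Sigma$ are fixed the corresponding locus of $M_\Gamma$ is a single $T_{\mathrm{ad}}$-orbit; hence the points representing $X_1$ and $X_2$ are $T_{\mathrm{ad}}$-conjugate, yielding a $G$-equivariant isomorphism $\K[X_1] \cong \K[X_2]$ and thus $X_1 \cong X_2$.

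I expect the genuine obstacle to lie precisely in the transition from the third to the fourth step: that the valuation cone captures \emph{exactly} the spherical-root support, and that fixing this support pins down a single $T_{\mathrm{ad}}$-orbit. A priori the deformation of the horospherical algebra could involve directions not governed by spherical roots, or $M_\Gamma$ could be singular enough that equal support fails to imply $T_{\mathrm{ad}}$-conjugacy. Ruling this out requires the rigidity results for $M_\Gamma$ — smoothness together with finiteness of its $T_{\mathrm{ad}}$-orbits, and the linear independence of $\Sigma$ — and this is where the substance of the theorem resides.
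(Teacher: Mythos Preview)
Your approach via the Alexeev--Brion moduli scheme $M_\Gamma$ is natural and genuinely different from the paper's, but it has a real gap exactly where you flag it. The linear independence of $\Sigma$ lets $T_{\mathrm{ad}}$ rescale the structure constants sitting in the weight spaces indexed by elements of $\Sigma$ itself; however, the multiplication on $\K[X_i]$ also carries terms whose $T_{\mathrm{ad}}$-weights are nontrivial sums of spherical roots, and nothing you have written forces those higher terms to be determined once the $\Sigma$-components are normalized. What would close the gap is knowing that $M_\Gamma$ is smooth (indeed an affine space on which $T_{\mathrm{ad}}$ acts linearly with weight set $\Sigma$), for then the orbits really are parametrized by their supports. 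But the proofs of that structural fact about $M_\Gamma$ rely on the very uniqueness theorem you are trying to establish (or on the full classification of spherical systems, which is of comparable depth), so invoking it here is circular. The Alexeev--Brion input available a priori --- finiteness of $T_{\mathrm{ad}}$-orbits and the horospherical fixed point --- is not enough to separate two orbits with the same spherical-root support.

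The paper proceeds along entirely different lines: it reduces the affine statement to the homogeneous one (Theorem~\ref{Thm:3}). One passes to the open $G$-orbits $X_i^0$, shows that $\X^+_{G,X_i}$ together with $\V_{G,X_i}$ control the invariants $(\X_{G,X_i^0},\D_{G,X_i^0},\Psi_{G,X_i^0})$, applies Theorem~\ref{Thm:3} to get $X_1^0\cong X_2^0$, and then recovers $\K[X_i]\subset \K[X_i^0]$ from the common weight monoid. The substance therefore lives in Theorem~\ref{Thm:3}, proved by a double induction: the local structure theorem replaces $G/H$ by a smaller affine spherical $M$-variety, and Knop's theory of inclusions of spherical subgroups passes to a larger $\widetilde{H}\supset H$; the base case is the classification of reductive spherical subgroups (Kr\"amer, Brion, Mikityuk), and an Alexeev--Brion rigidity argument is used only at the final step to convert a one-parameter deformation into conjugacy. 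At no point does the paper need the global geometry of $M_\Gamma$.
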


Finally, and, in a sense, most importantly, there is a uniqueness property for spherical homogeneous
spaces. Here we need three invariants of a spherical $G$-variety $X$.
The simplest one is the {\it weight lattice}
$\X_{G,X}$ consisting of all weights of $B$ in the field $\K(X)$ of rational functions, see \cite{Knop5},
the paragraph after the proof of Theorem 1.7, or \cite{Timashev}, Section 15.  The second invariant is the set of $B$-stable prime divisors of $X$ denoted by
$\D_{G,X}$. This is a finite set and we equip it with two maps. The first one maps $\D_{G,X}$ to
$\X_{G,X}^*:=\Hom_{\Z}(\X_{G,X},\Z)$, see \cite{Knop5}, Section 2, page 8, or \cite{Timashev},
Section 15 (in both these papers the map is denoted by $\rho$). We denote the image of $D$ by $\varphi_D$. The second map we need maps $D\in \D_{G,X}$ to its
(set-wise) stabilizer $G_D\subset G$. By definition, $G_D$ is a parabolic subgroup in $G$ containing $B$.
Finally, the last invariant we need is $\V_{G,X}$.

The following theorem was, essentially, conjectured by Luna, \cite{Luna4}.

\begin{Thm}[\cite{unique}]\label{Thm:3}
Let $X_1,X_2$ be spherical homogeneous spaces such that $\X_{G,X_1}=\X_{G,X_2}, \D_{G,X_1}=\D_{G,X_2},\V_{G,X_1}=\V_{G,X_2}$. Then $X_1,X_2$ are $G$-equivariantly isomorphic.
\end{Thm}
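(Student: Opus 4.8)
The plan is to reduce Theorem \ref{Thm:3} to the affine uniqueness result of Theorem \ref{Thm:1} in two stages: first trivializing the ``central'' directions by passing to the normalizer, and then handling a wonderful compactification. So I would begin by reducing to the case where $H$ is self-normalizing, i.e. $H=N_G(H)$. The natural projection $G/H\to G/N_G(H)$ is a fibration with fiber the diagonalizable group $N_G(H)/H$, so that $\K(G/N_G(H))\subset\K(G/H)$ and $\X_{G,G/N_G(H)}$ is a sublattice of $\X_{G,X}$ whose quotient is the character lattice of $N_G(H)/H$. Knop's description of the valuation cone shows that $\V_{G,G/N_G(H)}$ is the image of $\V_{G,X}$ under the dual projection, and that passing to the normalizer exactly kills the linear part $\V_{G,X}\cap(-\V_{G,X})$; the colors together with the maps $\varphi_D$ and the parabolics $G_D$ descend by pullback. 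Hence the datum of $G/N_G(H)$ is computed from that of $G/H$. Conversely, the intermediate subgroups $H\subseteq H'\subseteq N_G(H)$ are in bijection with subgroups of $N_G(H)/H$, hence with sublattices of $\X_{G,X}$ containing $\X_{G,G/N_G(H)}$, so once $N_G(H)$ is pinned down the weight lattice $\X_{G,X}$ recovers $H$ itself. This reduces everything to the sober case, where $\V_{G,X}$ is strictly convex and $G/H$ possesses a wonderful compactification.

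Next I would translate the remaining data into the combinatorial invariants of the wonderful compactification $\overline{X}$. The facets of the now strictly convex cone $\V_{G,X}$ read off the set of spherical roots $\Sigma$; the colors $\D_{G,X}$ together with the maps $\varphi_D$ and the parabolics $G_D$ encode the colors of $\overline{X}$ with their pairing against $\Sigma$, as well as the subset $S^p\subseteq S$ of simple roots whose minimal parabolic preserves the open $B$-orbit; and $\X_{G,X}$ is recovered from $\Sigma$ and this color data. Thus $(\X_{G,X},\D_{G,X},\V_{G,X})$ and the spherical system of $\overline{X}$ determine one another, and the problem becomes the assertion that a wonderful $G$-variety is determined up to $G$-isomorphism by its spherical system.

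The crux, and the step I expect to be the main obstacle, is this uniqueness for wonderful varieties, and here I would again invoke Theorem \ref{Thm:1}. The idea is to attach to $\overline{X}$ a canonical affine spherical variety whose weight monoid and valuation cone are forced by the spherical system. Concretely, fix a $G$-linearized ample line bundle $\mathcal{L}$ on $\overline{X}$ whose class is determined by the combinatorics, and form the affine cone $\widehat{X}=\Spec\bigoplus_{n\geq 0}\Gamma(\overline{X},\mathcal{L}^{\otimes n})$. Since $\overline{X}$ is wonderful, each $\Gamma(\overline{X},\mathcal{L}^{\otimes n})$ is a multiplicity-free $G$-module whose highest weights are prescribed by the spherical roots, the colors, and $S^p$; hence the weight monoid $\X^+_{G,\widehat{X}}$ is a combinatorial invariant, and one checks that $\V_{G,\widehat{X}}$ is likewise determined by $\V_{G,X}$ together with the grading direction. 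Theorem \ref{Thm:1} then forces $\widehat{X}$ to be unique up to $G$-isomorphism, and $\overline{X}$ is recovered as an associated $\operatorname{Proj}$, with $G/H$ its open orbit. The delicate points, which is where the real work lies, are to produce $\mathcal{L}$ and the multiplicity formula intrinsically, i.e. without presupposing the very variety one is classifying, and to verify that the recovered open orbit indeed has $H$ self-normalizing with the prescribed datum, rather than merely being some space sharing the same affine cone.
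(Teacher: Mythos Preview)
Your proposal has a fatal circularity. You want to deduce Theorem~\ref{Thm:3} from Theorem~\ref{Thm:1}, but in this paper the logical dependence runs in exactly the opposite direction: Section~2 explains that the affine uniqueness theorems (Theorems~\ref{Thm:2} and~\ref{Thm:1}) are both proved by reducing to Theorem~\ref{Thm:3}. So invoking Theorem~\ref{Thm:1} at the crux of your argument begs the question. There is no independent proof of Theorem~\ref{Thm:1} available here to lean on.

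Even setting this aside, your scheme has further gaps. The affine cone $\widehat{X}$ is singular, so you must use Theorem~\ref{Thm:1} rather than Theorem~\ref{Thm:2}; but then you need to know $\V_{G,\widehat{X}}$, and your claim that it is ``determined by $\V_{G,X}$ together with the grading direction'' hides exactly the kind of computation that the paper carries out the other way around. More seriously, recovering $\overline{X}$ from $\widehat{X}$ via $\operatorname{Proj}$ presupposes that the $G$-isomorphism of cones provided by Theorem~\ref{Thm:1} respects the grading, which is not automatic.

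The paper's actual route is entirely different and does not pass through any affine uniqueness statement. It is a direct induction on spherical subgroups: the base case is when $H_1,H_2$ are not contained in any proper parabolic, hence reductive, and one appeals to the Kr\"amer--Brion--Mikityuk classification. For the inductive step one uses Knop's theory of colored subspaces to produce a common minimal overgroup $\widetilde{H}\supset H_1,H_2$, then the local structure theorem to compare the affine $M$-varieties $M*_S(\Rad_u(\q^-)/\n_i)$, and finally a deformation argument together with the Alexeev--Brion rigidity results to conclude that $\h_1$ and $\h_2$ are conjugate.
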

The equality $\D_{G,X_1}=\D_{G,X_2}$ requires some explanation (the other two are just equalities
of subsets in some ambient set). This equality means that there is a bijection $\iota:\D_{G,X_1}\rightarrow
\D_{G,X_2}$ with $\varphi_{\iota(D)}=\varphi_D$ and $G_{\iota(D)}=G_D$. However, let us note that even if such a bijection exists it is, in general, not unique. Indeed, any $G$-equivariant automorphism $\varphi$ of $X_2$
induces the bijection $\varphi_*:\D_{G,X_2}\rightarrow\D_{G,X_2}$ intertwining the two maps.
So we can compose $\iota$ with $\varphi_*$. It turns out (this is a non-trivial result) that any two bijections
$\iota$ differ by some $\varphi_*$.

\begin{Rem}
It is often more convenient to deal with the system of spherical roots of $X$.
It can be defined as follows. It is known from the work of Brion, \cite{Brion}, see also \cite{Timashev},
Section 22, that $\V_{G,X}$ is a Weyl chamber for the action of a finite reflection group
$W_{G,X}$ (the Weyl group of $X$) on $\Hom_\Z(\X_{G,X},\Q)$. So we can take linearly independent
primitive elements $\alpha_1,\ldots,\alpha_k\in \X_{G,X}$ such that $\V_{G,X}$ is given by the inequalities
$\alpha_i\leqslant 0$. The set $\{\alpha_1,\ldots,\alpha_k\}$ is called the system of spherical
roots of $X$,
we will denote it by $\Psi_{G,X}$. If $\X_{G,X}$ is specified, then $\V_{G,X}$ can be recovered
from $\Psi_{G,X}$ and vice versa.
\end{Rem}

In what follows I will call $\X_{G,X},\D_{G,X},\Psi_{G,X}$ the basic combinatorial invariants
of the spherical $G$-variety $X$.

To finish the section let us consider an application of Theorem \ref{Thm:2}, which, in fact,
motivated Knop to make his conjecture. This application is the Delzant conjecture from the theory
of Hamiltonian actions of compact groups, a reader is referred to \cite{GS} for definitions.

Let $K$ be a connected compact Lie group and $\k$ be the Lie algebra of $K$. Fix a maximal torus $T_K\subset K$
and let $\t_K$ denote the corresponding Lie algebra. Fix a Weyl chamber $C\subset \t_K$.
Our goal is, again, to present combinatorial invariants separating {\it multiplicity free} Hamiltonian $K$-manifolds. Recall that one of the equivalent definitions of a multiplicity free compact Hamiltonian
$K$-manifold $M$ is that a general orbit of $K$ on $M$ is a coisotropic submanifold.

Let $M$ denote a multiplicity free compact Hamiltonian manifold with moment map $\mu:M\rightarrow \k$.
Recall the moment polytope $\Delta(M)=\mu(M)\cap C$. This is the first invariant we need.
The second one is the so called {\it principal isotropy group}, which will be denoted by
$K(M)$. It is defined as the stabilizer
of a general point $x\in \mu^{-1}(C)$. It turns out that this stabilizer does not depend
on the choice of $x$.

\begin{Conj}[Delzant]
Let $M_1,M_2$ be multiplicity free compact Hamiltonian $K$-manifolds such that
$\Delta(M_1)=\Delta(M_2)$ and $K(M_1)=K(M_2)$. Then $M_1,M_2$ are $K$-equivariantly
symplectomorphic.
\end{Conj}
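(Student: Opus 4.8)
The plan is to derive the conjecture from Theorem~\ref{Thm:2} by translating the symplectic problem into the algebraic language of spherical varieties. Let $G$ be the complexification of $K$; it is a connected reductive group sharing the root datum of $K$, with Borel subgroup $B$ and maximal torus $T$ obtained by complexifying the corresponding subgroups of $K$. The key is a dictionary assigning to the germ of $M$ along a momentum fibre a smooth affine spherical $G$-variety whose weight monoid encodes the local shape of $\Delta(M)$. Once this dictionary is in place, the uniqueness of the local pieces becomes an instance of Theorem~\ref{Thm:2}, and what remains is to see that these pieces determine $M$ globally.

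First I would analyse the structure of $M$ near $\mu^{-1}(F)$ for each face $F$ of $\Delta(M)$ by means of the symplectic local normal form (Marle, Guillemin--Sternberg, and, in the multiplicity free setting, Sjamaar). Such a neighbourhood is a homogeneous fibration over a coadjoint orbit whose fibre is a symplectic slice representation $V_F$. The coisotropy (multiplicity free) hypothesis forces $V_F$ to be a multiplicity free module for the relevant Levi subgroup $L_F\subset G$, so that the affinisation of the associated variety is a \emph{smooth} affine spherical $L_F$-variety $X_F$. I would then verify that the weight monoid $\X^+_{L_F,X_F}$ is read off directly from the tangent cone of $\Delta(M)$ at $F$, the ambient lattice being pinned down by the principal isotropy group $K(M)$; thus the combinatorial invariants of $X_F$ depend only on the data $(\Delta(M),K(M))$.

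At this point Theorem~\ref{Thm:2}, applied with $G$ replaced by $L_F$, shows that each $X_F$ is determined up to $L_F$-equivariant isomorphism by its weight monoid, and hence so is each local model of $M$. It remains to reconstruct $M$ from its local models. Here I would use the convexity theorem together with the connectedness of momentum fibres (Kirwan) to show that the way the faces of $\Delta(M)$ fit together forces the combinatorics of the gluing, and then a Mayer--Vietoris type comparison of the local symplectic models over the overlaps to assemble a global symplectomorphism $M_1\to M_2$ intertwining the $K$-actions and the moment maps.

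The hard part, given Theorem~\ref{Thm:2}, is twofold. On the one hand one must make the symplectic-to-algebraic dictionary precise: identifying the germ of a multiplicity free manifold with a smooth affine spherical variety, and matching the moment polytope data with the weight monoid and lattice, requires careful bookkeeping with the slice representations and with the Kähler/reduction constructions. On the other hand the genuinely global step --- gluing the (now unique) local models into a single manifold without introducing monodromy or twisting ambiguities --- is where one must control the automorphisms of the local pieces, and I expect this reconstruction, rather than the local uniqueness, to be the principal obstacle.
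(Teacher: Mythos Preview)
The paper does not actually prove this conjecture: it only records that Knop derived it from Theorem~\ref{Thm:2} in the mid 90's without publishing the argument, and compresses the relation into a single sentence --- ``Knop's is a local version of Delzant's.'' Your outline (local normal form producing smooth affine spherical varieties for the complexified Levi, Theorem~\ref{Thm:2} for local uniqueness, then a sheaf/Mayer--Vietoris gluing controlled by automorphisms of the local models) is precisely the strategy that sentence encodes, so there is nothing further in the paper to compare against.
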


Knop derived this conjecture from his own in mid 90's, however the proof was never published.
In a sentence, the relation between the two conjectures is that Knop's is a local version of Delzant's.

\section{Sketch of  reduction of the affine case to the homogeneous case}
The proof of all three main theorems is based on  inductive arguments. We basically have two kinds
of inductive steps. One works for homogeneous spaces only and is based on Knop's theory
of inclusions of spherical subgroups, \cite{Knop5}, Section 4, the other works for all varieties and is based
on the Brion-Luna-Vust local structure theorem. We will explain a variant of this theorem
due to Knop, \cite{Knop3}.

\begin{Thm}\label{Thm:loc_struc}
Let $X$ be some normal $G$-variety and $\widetilde{D}$ be a $B$-stable effective Cartier divisor.
Let $P$ be the stabilizer of $\widetilde{D}$ and $X^0$ denote the complement of $\widetilde{D}$ in $X$.
Finally, let $M$ be the Levi subgroup of $P$ containing $T$ so there is the Levi
decomposition $P=M\rightthreetimes \Rad_u(P)$. Then there is an $M$-stable
subvariety $\Sigma\subset X^0$ such that the natural morphism
$\Rad_u(P)\times \Sigma\rightarrow X^0, (p,s)\mapsto ps,$ is an isomorphism.
\end{Thm}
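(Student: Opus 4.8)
The plan is to reduce the statement to a linear model via a $P$-equivariant projective embedding and then transport the resulting product decomposition back to $X^0$. First I would produce the geometric avatar of $\widetilde D$. Since $\widetilde D$ is an effective Cartier divisor stabilized by $P$, the line bundle $L:=\O_X(\widetilde D)$ carries a $P$-linearization, and its canonical section $\eta$ (with $\operatorname{div}\eta=\widetilde D$) is a $P$-eigensection of some weight $\chi$; note that $X^0=\{\eta\neq 0\}$ and that $\eta$ trivializes $L$ over $X^0$. The crucial elementary observation is that $\u:=\Lie\Rad_u(P)$ annihilates $\eta$: indeed $\Rad_u(P)$ is unipotent and fixes the eigenline $\K\eta$ pointwise, so $\xi\eta=0$ for $\xi\in\u$. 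Because $H^0(X,L)$ is a union of finite-dimensional $P$-submodules, I may choose a finite-dimensional $P$-stable $V\subseteq H^0(X,L)$ with $\eta\in V$ and form the $P$-equivariant morphism $\psi\colon X^0\to\P(V^*)$, $x\mapsto[\,v\mapsto v(x)/\eta(x)\,]$. Since $\eta$ is nowhere zero on $X^0$, the image lies in the affine chart $C:=\{[\ell]:\ell(\eta)\neq 0\}$, and in fact $\psi^{-1}(C)=X^0$.

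The heart of the argument is a structural statement about the chart itself: identifying $C$ with the affine space $\{\ell\in V^*:\ell(\eta)=1\}$, which is $P$-stable once the action is twisted by $\chi$, I claim there is an $M$-stable closed subvariety $S\subseteq C$ such that the multiplication $\Rad_u(P)\times S\to C$ is a $P$-equivariant isomorphism, with $\Rad_u(P)$ acting freely. This is the \emph{big cell} decomposition. To prove it I would use the cocharacter $\lambda\colon\mathbb{G}_m\to Z(M)$ with $P=P(\lambda)$, which grades $V^*$ with \emph{positive} weights exactly along the $\u$-directions: the relation $\u\eta=0$ guarantees that for a base point $\ell_0\in C$ of minimal $\lambda$-weight the subspace $\u\cdot\ell_0$ lies in the hyperplane $\eta^\perp=\{\ell:\ell(\eta)=0\}$ and has dimension $\dim\Rad_u(P)$; an $M$-stable complement $W$ to $\u\cdot\ell_0$ in $\eta^\perp$ then yields $S:=\ell_0+W$, and the orbit map is an isomorphism by triangularity of the unipotent action with respect to the $\lambda$-grading (a Bialynicki--Birula/contraction argument). \textbf{This chart decomposition is the step I expect to be the main obstacle}, since freeness of $\Rad_u(P)$ and the existence of a genuine (not merely generic) $M$-stable section on the \emph{whole} chart must be controlled for \emph{every} $B$-stable Cartier divisor, and one has only the $P$-linearization, not a $G$-linearization, of $L$ at one's disposal.

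Granting the chart decomposition, the pullback to $X^0$ is formal. I would set $\Sigma:=\psi^{-1}(S)$; this is $M$-stable because $S$ is $M$-stable and $\psi$ is $M$-equivariant. Writing $\theta\colon\Rad_u(P)\times S\xrightarrow{\sim}C$ for the chart isomorphism and $\beta:=\mathrm{pr}_{\Rad_u(P)}\circ\theta^{-1}\circ\psi\colon X^0\to\Rad_u(P)$ for the ``$\Rad_u(P)$-coordinate'', the assignment $x\mapsto\bigl(\beta(x),\beta(x)^{-1}x\bigr)$ is a morphism $X^0\to\Rad_u(P)\times\Sigma$: indeed $\psi(\beta(x)^{-1}x)=\beta(x)^{-1}\psi(x)\in S$ by equivariance, so $\beta(x)^{-1}x\in\Sigma$. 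A direct check using $P$-equivariance shows this is a two-sided inverse to $(p,s)\mapsto ps$, so the latter is the desired $G$-equivariant isomorphism onto $X^0$.

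Finally I would record two bookkeeping points. First, the identification of $P$ as stated, the \emph{stabilizer} of $\widetilde D$, with $\operatorname{Stab}_G[\eta]$: the inclusion $\operatorname{Stab}_G[\eta]\subseteq P$ is clear, and equality holds because a $B$-eigensection with prescribed divisor is unique up to a scalar (using normality of $X$), so every $g\in P$ must merely rescale $\eta$ and hence fixes $[\eta]$. Second, normality of $X$, and thus of $X^0$, is what legitimizes treating $\eta$ as a genuine section with well-behaved divisor and guarantees that the constructed inverse is a morphism rather than just a rational map.
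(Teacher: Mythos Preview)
The paper does not give its own proof of this theorem; it is stated as the Brion--Luna--Vust local structure theorem in Knop's version and attributed to \cite{Knop3}. So there is no ``paper's proof'' to compare against, but your outline is close enough to the standard argument that it is worth commenting on.

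There is a genuine gap, exactly at the point you yourself flagged. You choose $V\subseteq H^0(X,L)$ to be merely \emph{$P$-stable} and then assert that $\u\cdot\ell_0$ has dimension $\dim\Rad_u(P)$. With only a $P$-module structure on $V$ this need not hold: already $V=\K\eta$ gives $\u\cdot\ell_0=0$, and in general nothing about a $P$-module forces the nilpotent $\u$-action on $V^*$ to be free at any point of the chart. The relation $\u\eta=0$ yields only the containment $\u\cdot\ell_0\subseteq\eta^\perp$, not the dimension count. Hence your big-cell isomorphism $\Rad_u(P)\times S\xrightarrow{\sim}C$ is unproved, and the pullback step, while formally correct, has nothing to stand on.

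The fix is to drop your self-imposed restriction to a $P$-linearization. Since $G$ is connected and $X$ is normal, some power $L^{\otimes n}$ carries a $G$-linearization; replacing $\widetilde D$ by $n\widetilde D$ changes neither $X^0$ nor $P$. Take $V$ to be the finite-dimensional $G$-submodule generated by $\eta$. Then $\eta$ is a highest weight vector and, crucially, $P$ is \emph{exactly} $\operatorname{Stab}_G[\eta]$ (this is where the hypothesis that $P$ is the full stabilizer of $\widetilde D$, not merely some parabolic containing $B$, is used), so $\g/\p\to V/\K\eta$, $\xi\mapsto\xi\eta$, is injective. From here one can either complete your chart argument honestly, or follow Knop directly: the map $\Psi\colon X^0\to\g^*$, $\Psi(x)(\xi)=(\xi\eta)(x)/\eta(x)$, lands in the affine slice $d\chi+\p^\perp$, on which $\Rad_u(P)$ acts simply transitively, and $\Sigma:=\Psi^{-1}(d\chi)$ is the desired section.
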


It is easy to see that $\Sigma$ is $M$-spherical provided $X$ is $G$-spherical.
Also one can check that $\Sigma$ is affine provided $X$ is. The latter follows from the general fact that
a complement to a divisor in an affine variety is affine provided the divisor is Cartier.

Actually, one can recover combinatorial invariants of $\Sigma$ from those of
$X$:
\begin{itemize}
\item[(A)] $\X_{M,\Sigma}=\X_{G,X}$.
\item[(B)] As an abstract set, $\D_{M,\Sigma}=\D_{G,X}\setminus \D$, where $\D$ is the set of irreducible
components of $\widetilde{D}$. For $D\in \D_{M,\Sigma}$ we have  $M_D=M\cap G_D$ and the vector
$\varphi_D$ is the same as before.
\item[(C)] $\Psi_{M,\Sigma}$ is the intersection of $\Psi_{G,X}$ with the linear span  of
the root system $\Delta(\m)$.
\item[(D)] Suppose $X$ is affine and $D$ is the zero divisor of some $B$-semiinvariant function
$f_{\mu}\in \K[X]$. Then $\X^+_{M,\Sigma}=\X^+_{G,X}+\Z\mu$.
\end{itemize}

\begin{Exer}
Prove (A),(B),(D).
\end{Exer}

Our strategy in the proofs of Theorems \ref{Thm:2},\ref{Thm:1} is to reduce them to Theorem
\ref{Thm:3}. We will concentrate on Theorem \ref{Thm:2} from now on.
We need to check the following two claims:
\begin{itemize}
\item[(*)] Let $X_1,X_2$ be smooth affine spherical varieties such that $\X^+_{G,X_1}=\X^+_{G,X_2}$.
Then $\D_{G,X_1}=\D_{G,X_2}$.
\item[(**)] Let $X_1,X_2$ be as in (*), so $\D_{G,X_1}=\D_{G,X_2}$. Then $\Psi_{G,X_1}=\Psi_{G,X_2}$.
\end{itemize}

Once (*) and (**) are proved we can deduce Theorem \ref{Thm:2} from Theorem \ref{Thm:3} as follows.

\begin{proof}[Proof of Theorem \ref{Thm:2}]
Let $X_1^0,X_2^0$ be the open $G$-orbits in $X_1,X_2$. One can easily recover the basic combinatorial
invariants of $X_i^0$ from those of $X_i$.
\begin{Exer}
Do it.
\end{Exer}
It follows that $X_1^0,X_2^0$ satisfy the conditions
of Theorem \ref{Thm:3} and so are isomorphic. Identify $\K[X_1^0]\cong \K[X_2^0]$. Then $\K[X_1]=\K[X_2]$
since both are the sums of all $V(\lambda)\subset \K[X_i^0]$ with $\lambda\in \X^+_{G,X_i}$.
\end{proof}

Now we will sketch the proof of (*). An essential ingredient of the proof is the property (D) above.

Namely, choose  noninvertible $\mu\in \X^+_{G,X_i}$. Let $X_1(\mu),X_2(\mu)$ be the $M$-varieties obtained
from $X_1,X_2$ by using the local structure theorem (here $M$ is the stabilizer of $\mu$ in $G$). Then there exists a bijection $\iota_\mu: \D_{M,X_1(\mu)}\rightarrow\D_{M,X_2(\mu)}$ with the required properties.
As we mentioned above, $\D_{M,X_i(\mu)}$ is a subset of $\D_{G,X_i}$. It consists precisely of those
$D\in \D_{G,X_i}$ s.t. $\langle\varphi_D,\mu\rangle=0$. In a sense,  using the local structure
theorem, we can "reveal" all divisors $\D\in \D_{G,X_i}$ such that $\langle\varphi_D,\mu\rangle=0$ for some
noninvertible $\mu\in \X^+_{G,X_i}$. This motivates the following definition.

\begin{defi}
An element $D\in \D_{G,X_i}$ is called {\it hidden} if $\langle\varphi_D,\mu\rangle>0$ for all
$\mu\in \X^+_{G,X_i}\setminus -\X^+_{G,X_i}$.
\end{defi}

As the following example shows, hidden divisor do occur.

\begin{Ex}
Let $G=\SL(n), H=\GL(n-1)\subset \SL(n)$. Then $\D_{G,G/H}$ consists of two elements, and $\X_{G,G/H}$
has rank 1. Both elements of $\D_{G,G/H}$ are hidden.
\end{Ex}

\begin{Exer}
Use an embedding $G/H\hookrightarrow \mathbb{P}(\K^n)\times \mathbb{P}(\K^{n*})$ to prove all claims of the
previous example.
\end{Exer}

There are also other examples, but this one is the most nontrivial, and, in a sense, everything
else reduces to it.

So to construct a bijection $\iota:\D_{G,X_1}\rightarrow \D_{G,X_2}$ we need:
\begin{itemize}
\item To compose different bijections $\iota_\mu$ together (they do not necessarily agree
on intersections). This is relatively easy.
\item To show that one actually has coincidence of stabilizers in $G$ not in different $M$'s.
\item To get reasonable description of all cases with hidden divisors and  deal with them.
\end{itemize}

The last two parts are quite difficult. We are not going to provide
details here.

\section{Sketch of the proof in the homogeneous case}
In this section we will provide a sketch of the proof of Theorem \ref{Thm:1}.

As I mentioned before the proof is based on two types of induction.

{\bf 1. Local structure theorem.} Let me explain how to apply Theorem \ref{Thm:loc_struc} in this case.

Let $H\subset G$ be a spherical subgroup that can be included into a proper parabolic subgroup.
Conjugating, we may assume that $H$ is contained in a parabolic subgroup $Q^-$ that is opposite
to $B$. Let $M$ be the Levi subgroup of $Q^-$ containing $T$ and $Q:=BM$. Consider the projection $\pi:G/H\twoheadrightarrow G/Q^{-}$. Let $\widetilde{D}_0$
be the complement to the open $Q$-orbit in $G/Q^-$. Then $\widetilde{D}_0$ is a divisor,
so we can take $\widetilde{D}=\pi^{-1}(\widetilde{D}_0)$ in Theorem \ref{Thm:loc_struc}.

\begin{Exer}
Show that one can take $Q^-/H$ for the section $\Sigma$.
\end{Exer}

In general, the spherical variety $Q^-/H$ is hard to deal with. However, there are cases when
this variety is affine. Namely, let $H=S\rightthreetimes N$ be a Levi decomposition.
Suppose that $N$ is contained in the unipotent radical $\Rad_u(Q^-)$ of $Q^-$ (this is
always the case when $Q^-$ is a minimal parabolic containing $H$). Then we can conjugate
$H$ by an element from $\Rad_u(Q^-)$ and assume that $S\subset M$. In this case $Q^-/H$
is the homogeneous vector bundle $M*_S (\Rad_u(\q^-)/\n)$ (over $M/S$ with fiber
$\Rad_u(\q^-)/\n$).

{\bf 2. Knop's theory of inclusions of spherical subgroups.}
Let $X:=G/H$ be a homogeneous space.  Then
\begin{enumerate}
\item The set of all subgroups $\widetilde{H}\subset G$ such that $\widetilde{H}\supset H$ and
 $\widetilde{H}/H$ is connected can be described entirely in terms
of  $\X_{G,X},\D_{G,X},\V_{G,X}$. Namely,  subgroups $\widetilde{H}$
are in one-to-one correspondence with pairs $(\a,\D)$ (so called, {\it colored subspaces}),
where $\a$ is a subspace in $\Hom_\Z(\X_{G,X},\Q)$,  $\D$ is a subset in $\D_{G,X}$,
satisfying some combinatorial conditions.
\item Let $\widetilde{H}$ be a subgroup corresponding to a colored subspace
$(\a,\D)$. Then the combinatorial invariants of $G/\widetilde{H}$ can be recovered
from those for $G/H$ and from $(\a,\D)$.
\end{enumerate}

This claim allows to do induction. Namely let us take two homogeneous spaces $X_1=G/H_1,X_2=G/H_2$
satisfying the conditions of Theorem \ref{Thm:2}. Take a minimal subgroup $\widetilde{H}_1$
containing $H_1$ properly. This subgroup gives rise to a colored subspace (for $X_1$).
Take the same colored subspace for $X_2$ and let $\widetilde{H}_2$ be the
corresponding subgroup of $G$ containing $H_2$. But now $G/\widetilde{H}_1$ and $G/\widetilde{H}_2$
have the same basic combinatorial invariants. By inductive assumptions, $\widetilde{H}_1$ and $\widetilde{H}_2$
are conjugate. So we may assume that
\begin{enumerate} \item $\widetilde{H}=\widetilde{H}_1=\widetilde{H}_2$,
\item and the colored
subspaces of the inclusions $H_1\subset \widetilde{H}, H_2\subset \widetilde{H}$ are the same.
\end{enumerate}
There is a subtlety in the second part coming from the fact that a bijection between the
sets of divisors is non-unique, but this can be fixed.

We described the induction steps but did not mention the base. Well, the base is the case when
neither $H_1$ nor $H_2$ can be included into a proper parabolic. But then $H_1,H_2$
are both reductive and one can use the classification due to Kr\"{a}mer, Brion and Mikityuk, \cite{Kramer},
\cite{Brion_class}, \cite{Mikityuk},
to prove Theorem \ref{Thm:3}.

Now we are ready to give a sketch of the proof of Theorem \ref{Thm:3}.

{\it Step 1.} We may assume that $N_G(H_i)^\circ\subset H_i$ for $i=1,2$. This can
be deduced from Luna's results, \cite{Luna5}.

{\it Step 2.} Let $H_1=S\rightthreetimes N_1$ be a Levi decomposition. Then there exists a subgroup
$\widetilde{H}$ containing $H_1$ and such that $\widetilde{H}=S\rightthreetimes \widetilde{N}$, where
$\widetilde{\n}/\n_1$ is an irreducible $S_1$-module. As explained above, we may assume that $H_2$
is contained in $\widetilde{H}$. Thus $\dim H_2/\Rad_u(H_2)\leqslant \dim \widetilde{H}/\Rad_u(\widetilde{H})=\dim S$. Now from the symmetry between $H_2$ and $H_1$ we see that the previous inequality turns into equality
so we can write $H_2=S\rightthreetimes N_2$. Since $\widetilde{H}$ is a minimal subgroup containing
$H_2$ we get that $\widetilde{\n}/\n_2$ is an irreducible $S$-module.

{\it Step 3.}
Pick a minimal parabolic subgroup
$Q^-$ containing $N_G(\widetilde{H})$. Note that $N_1,N_2\subset \widetilde{N}\subset \Rad_u(Q^-)$.
We may assume that $Q^-$ is opposite to $B$
and that $S\subset M$. From the local structure theorem we deduce that the affine
spherical $M$-varieties $Q/N_1=M*_S(\Rad_u(\q^-)/\n_1),Q/N_2=M*_S(\Rad_u(\q^-)/\n_2)$ have the same basic combinatorial invariants. Therefore, using the induction assumption, we obtain that there is an $M$-equivariant
isomorphism $M*_S(\Rad_u(\q^-)/\n_1)\rightarrow M*_S(\Rad_u(\q^-)/\n_2)$.
It follows that the $S$-modules $\Rad_u(\q^-)/\n_1,\Rad_u(\q^-)/\n_2$ are
conjugate under the action of $N_M(S)$.

{\it Step 4.} Suppose for a moment that $\Rad_u(\q^-)/\n_1$ and $\Rad_u(\q^-)/\n_2$ are
actually isomorphic as $S$-modules. Then $\widetilde{\n}/\n_1,\widetilde{\n}/\n_2$ are isomorphic
as $S$-modules. The following exercise implies that $[\widetilde{\n},\widetilde{\n}]\subset\n_1\cap \n_2$.

\begin{Exer}
Let $\widetilde{\n}$ be a nilpotent Lie algebra, $S$ an algebraic group acting on $\widetilde{\n}$
by Lie algebra automorphisms. Let $\n$ be an $S$-stable subalgebra in $\widetilde{\n}$
such that $\widetilde{\n}/\n$ is an irreducible $S$-module. Then $[\widetilde{\n}/\widetilde{\n}]\subset \n$.
\end{Exer}

Now since $[\widetilde{\n},\widetilde{\n}]\subset \n_1\cap\n_2$ and $\widetilde{\n}/\n_1,\widetilde{\n}/\n_2$ are
$S$-equivariantly isomorphic, there is a family $\n(t), t\in \mathbb{P}^1,$ of $S$-stable subspaces of
$\widetilde{\n}$ containing $\n_1\cap\n_2$ and such that $\n(0)=\n_1,\n(\infty)=\n_2$.
So $\h_1$ can be deformed to $\h_2$. Now the rigidity
results of Alexeev and Brion, \cite{AB}, together with the assumption of step 1 imply that
$\h_1,\h_2$ are $G$-conjugate. Using the equality $\X_{G,X_1}=\X_{G,X_2}$ one can check that $H_1,H_2$
are also conjugate.

{\it Step 5.} Recall that we still have not proved that the $S$-modules $\Rad_u(\q^-)/\n_1,\Rad_u(\q^-)/\n_2$
are actually isomorphic. We only checked that they are conjugate under the action of $N_M(S)$ on the
 set of $S$-modules. So pick an element $\gamma\in N_M(S)$ that conjugates $\Rad_u(\q^-)/\n_2$ to
$\Rad_u(\q^-)/\n_1$. One can show that $\gamma^2$ acts trivially on the set of modules. It follows that $\gamma$ fixes $\Rad_u(\q^-)/\widetilde{\n}$.
So $\gamma$ can be lifted to an automorphism of $M*_S(\Rad_u(\q^-)/\widetilde{\n})$. Now the crucial observation
is that $\gamma$ can be actually lifted to an element  $g\in N_G(\widetilde{H})$. This follows from the description
of the group of equivariant automorphisms of a spherical variety. This description will be discussed
briefly in the next section. Replacing $H_2$ with $gH_2g^{-1}$ we obtain
$\Rad_u(\q^-)/\n_1\cong \Rad_u(\q^-)/\n_2$.

This completes the proof.

\section{Equivariant automorphisms and Demazure embeddings}
We have already mentioned that the group of equivariant automorphisms $\Aut^G(X)$ of a spherical
$G$-variety $X$ can be recovered (at least, when $X$ is affine or homogeneous) from the basic
combinatorial invariants. Let us state this description here.

At the first glance, the group $\Aut^G(X)$ has nothing to do with $\X_{G,X},\D_{G,X},\Psi_{G,X}$.
However,  Knop, \cite{Knop8}, discovered a relation. Let us recall that to any $\lambda\in \X_{G,X}$
we have assigned a unique (up to rescaling) rational function $f_\lambda\in \K(X)$.
Pick $\varphi:\Aut^G(X)$. Then $\varphi.f_\lambda$ is again $B$-semiinvariant of weight
$\lambda$. So there is a unique scalar $a_{\varphi,\lambda}$ such that $\varphi.f_{\lambda}=a_{\varphi,\lambda}f_{\lambda}$.

\begin{Exer}
The map $a_{\varphi}:\X_{G,X}\rightarrow \K^\times,a_\varphi(\lambda)=a_{\varphi,\lambda}$
is a character of $\X_{G,X}$.
\end{Exer}

Define the {\it root lattice} $\Lambda_{G,X}=\bigcap_{\varphi\in \Aut^G(X)}\ker a_{\varphi}$.
The following exercise explains the terminology.

\begin{Exer}
Let $X:=H$ be a connected reductive  algebraic group, and let $G:=H\times H$ act on $H$ by two-sided multiplications.
Then $\X_{G,X}$ is identified with the weight lattice of $H$. Show that $\Lambda_{G,X}$ is the root
lattice of $H$.
\end{Exer}

Similarly, one can define the homomorphisms $a_{\lambda}:\Aut^G(X)\rightarrow \K^\times$ for $\lambda\in \X_{G,X}$.
As Knop proved in \cite{Knop8}, $\bigcap_{\lambda\in \X_{G,X}}\ker a_\lambda=\{1\}$. So the group
$\Aut^G(X)$ is identified with $(\X_{G,X}/\Lambda_{G,X})^*$. Therefore to describe $\Aut^G(X)$
it will be enough to describe the subgroup $\Lambda_{G,X}\subset \X_{G,X}$.

We will do slightly better: we will describe a distinguished basis in $\Lambda_{G,X}$. Namely,
one can construct a set of vectors $\overline{\Psi}_{G,X}\subset \Lambda_{G,X}$ completely
analogously to the construction $\Psi_{G,X}\subset \X_{G,X}$.  Knop proved in
\cite{Knop8} that $\overline{\Psi}_{G,X}$ is a basis in $\Lambda_{G,X}$.

Clearly, any element in $\overline{\Psi}_{G,X}$ is proportional to exactly one element
on $\Psi_{G,X}$. In fact, all coefficients are either 1 or 2. Theorem 2 from \cite{unique}
explicitly explains how we distinguish between 1 and 2 analyzing $\Psi_{G,X}$
and $\D_{G,X}$. The description is technical and therefore we omit it (however, see Conjecture \ref{Conj:3}
in the next section; in the spherical case it boils down to the description).

Instead we will explain a result, which motivated us to  state and prove Theorem 2 from
\cite{unique} (applications to the proof of Theorem \ref{Thm:3} were discovered later).
This result is the conjecture of Brion, \cite{Brion}, that Demazure embeddings are smooth.

Let us explain what "Demazure embedding" means. Take a self-normalizing spherical subalgebra
$\h\subset\g$. Then we can consider the $G$-orbit $G\h$, where $\h$ is viewed
as a point in the Grassmann variety $\Gr(\g)$. Take the closure $\overline{G\h}$.
Brion conjectured that the closure is smooth.

The first indication that the Brion conjecture can be related to the description of
$\Aut^G(X)$ is as follows. Let us note that the open $G$-orbit in $\overline{G\h}$ is nothing
else but $G/N_G(\h)$. It is easy to notice that this homogeneous space has no nontrivial
$G$-equivariant automorphisms.

\begin{Exer}
Show it.
\end{Exer}

In \cite{Knop8} Knop proved that the {\it normalization} of the Demazure embedding is smooth. 
Brion's conjecture was proved by Luna for type A in \cite{Luna_Dem}. Using the description
of $\Aut^G(X)$ the author was able to extend Luna techniques to the general case, see
\cite{Demazure}.

\section{Generalizations}
All three theorems, as well as the description of the group of equivariant
automorphisms,  have interesting conjectural generalizations to the general (not necessarily
spherical) case. The invariants appearing in these generalizations are mostly quite difficult to define
 and are even more difficult to deal with.

Again, the conjecture that is easiest to state deals with smooth affine varieties.
For a generalization of a weight monoid $\X^+_{G,X}$ we take the algebra $\K[X]^U$ of $U$-invariants
equipped with the natural $T$-action. Here $U$ is the maximal unipotent subgroup of $B$.
The algebras of $U$-invariants were extensively studied in Invariant theory.
\begin{Exer}
Check that if $X$ if an affine spherical variety, then $\K[X]^U$ is $T$-equivariantly isomorphic
to the monoid algebra $\K[\X^+_{G,X}]$.
\end{Exer}

\begin{Conj}\label{Conj:1}
If $X_1,X_2$ are smooth affine $G$-varieties such that the algebras $\K[X_1]^U,\K[X_2]^U$ are
$T$-equivariantly isomorphic, then $X_1,X_2$ are $G$-equivariantly isomorphic.
\end{Conj}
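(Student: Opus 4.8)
The plan is to follow the architecture of the proof of Theorem~\ref{Thm:2}, but with the weight monoid replaced by the $T$-algebra $A:=\K[X]^U$ and the combinatorial reductions replaced by deformation-theoretic ones. The first step is to recover from $A$ alone the \emph{horospherical degeneration} of $X$. Write $A=\bigoplus_\lambda A_\lambda$ for its $T$-weight decomposition; the $\lambda$ that occur are exactly the highest weights appearing in $\K[X]$, and $\dim A_\lambda$ is the multiplicity of $V(\lambda)$. Filtering $\K[X]$ by a generic dominant one-parameter subgroup of $T$ and passing to $\gr\K[X]$, one gets a $G$-algebra whose $U$-invariants are again $A$ (the $U$-invariant multiplication $f_\lambda\cdot f_\mu\in A_{\lambda+\mu}$ is its own leading term) and whose full $G$-algebra structure is forced: it is $\bigoplus_\lambda V(\lambda)\otimes A_\lambda$ with multiplication equal to the Cartan projections $V(\lambda)\otimes V(\mu)\to V(\lambda+\mu)$ tensored with the multiplication of $A$. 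Thus $X_{\mathrm{hor}}:=\Spec(\gr\K[X])$ is \emph{canonically determined by $A$ as a $T$-algebra}, and both $X_1,X_2$ degenerate $G$-equivariantly to the same $X_{\mathrm{hor}}$.

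The second step is to collect all such degenerations into a single moduli space. Following Alexeev and Brion \cite{AB}, the affine $G$-schemes $Y$ with a fixed identification $\K[Y]^U\cong A$ of $T$-algebras are parametrized by an affine scheme $M=M_A$; concretely $M$ parametrizes $G$-algebra structures on the fixed $G$-module $\bigoplus_\lambda V(\lambda)\otimes A_\lambda$ that induce the given algebra $A$ on $U$-invariants. An adjoint torus $T_{\mathrm{ad}}$ (acting through the $T$-grading by rescaling structure constants) acts on $M$, the horospherical point $[X_{\mathrm{hor}}]$ is its unique fixed point, and every point flows to it. Points in one $T_{\mathrm{ad}}$-orbit represent $G$-isomorphic varieties. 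Our two varieties give points $[X_1],[X_2]\in M$, both representing \emph{smooth} varieties, so Conjecture~\ref{Conj:1} is equivalent to the assertion that all points of $M$ representing smooth varieties are pairwise $G$-isomorphic; it suffices to show that this smooth locus is a single $T_{\mathrm{ad}}$-orbit, i.e. that a smooth $X$ is \emph{rigid relative to its horospherical degeneration}.

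The third, and hardest, step is this rigidity. The tangent space to $M$ at $[Y]$ is the space of $G$-invariant first-order deformations of the multiplication on $\K[Y]$ preserving the $U$-invariant subalgebra $A$; by the theory of \cite{AB} this is an explicit $G$-invariant cohomology group built from the module $\bigoplus_\lambda V(\lambda)\otimes A_\lambda$. One must prove that for smooth $Y$ this tangent space is spanned by the weight directions of the $T_{\mathrm{ad}}$-action, so that $[Y]$ is a smooth point of $M$ lying on a single $T_{\mathrm{ad}}$-orbit whose closure contains $[X_{\mathrm{hor}}]$. In the spherical case this is precisely the multiplicity-free computation of Alexeev--Brion, whose output is Theorem~\ref{Thm:2}; the presence of multiplicities $\dim A_\lambda>1$ makes the relevant cohomology far more intricate, and I expect this vanishing/dimension estimate to be the principal obstacle. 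The natural way to cut it into smaller pieces is the local structure theorem (Theorem~\ref{Thm:loc_struc}) applied at a generic closed orbit: by Luna's slice theorem a smooth affine $X$ is \'etale-locally $G\times_H N$ for the generic isotropy $H$ and slice representation $N$, and one hopes to recover the pair $(H,N)$ from $A$ and to run an induction on $\dim X$, deducing the statement from the corresponding uniqueness for the slices. Making the passage from $A$ to the slice data $(H,N)$ precise, and controlling the invariant deformations globally rather than only at the slice, are the two points where arguments genuinely beyond the spherical setting will be needed.
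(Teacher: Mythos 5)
You were asked to prove Conjecture \ref{Conj:1}, which the paper does not prove: it is stated as open, and the paper explicitly says it is unlikely to be provable without first establishing a birational analogue of Theorem \ref{Thm:3} (Conjecture \ref{Conj:2}), offering only a five-step program through LV systems, homogeneous spaces, and a generalization of Knop's inclusion theory. So there is no proof in the paper to compare yours against; your proposal is an alternative \emph{program}, and it is genuinely different from the paper's: you work with the Alexeev--Brion moduli scheme $M_A$ of affine $G$-schemes with fixed $U$-invariants and aim at a rigidity statement for its smooth points, whereas the paper's route avoids deformation theory as the central tool and passes through birational classification. Your first two steps are in fact correct and consistent with \cite{AB}: the horospherical degeneration $\Spec(\gr\K[X])$ is canonically determined by $A=\K[X]^U$, and all affine $G$-schemes with $U$-invariants $A$ are parametrized by a finite-type affine scheme with a $T_{\mathrm{ad}}$-action whose unique fixed point is the horospherical one.

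The genuine gap is your third step, which is not a reduction of the problem but a restatement of the entire conjecture, and it rests on a misattributed base case. You claim that in the spherical case the required tangent-space computation ``is precisely the multiplicity-free computation of Alexeev--Brion, whose output is Theorem \ref{Thm:2}.'' That is false: Theorem \ref{Thm:2} is Knop's conjecture, proved in \cite{Knop_conj} by the combinatorial reduction to Theorem \ref{Thm:3} sketched in Section 2 of this paper (claims (*) and (**)), not by a rigidity computation on the Alexeev--Brion moduli scheme. What \cite{AB} actually provides --- and what the paper cites it for --- is the finiteness statement (Corollary 2.9) and rigidity results used in Step 4 of the proof of Theorem \ref{Thm:3} to conclude conjugacy of deformed subalgebras; it does not prove that the points of $M_A$ representing smooth varieties form a single $T_{\mathrm{ad}}$-orbit, a statement which is essentially equivalent to Knop's conjecture itself. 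Consequently, even in the multiplicity-free case your program has no known template to follow, and in the general case ($\dim A_\lambda>1$) there is no available description of the tangent space to $M_A$ that would let you run the vanishing argument; your own closing sentences concede exactly this. So the proposal, like the paper, stops precisely where the real difficulty begins, but unlike the paper's program it presents the hardest step as if its prototype were already in the literature.
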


In general, it is known, see \cite{AB}, Corollary 2.9, that there are only finitely many isomorphism
classes of affine $G$-varieties $X$ with a fixed $\K[X]^U$. 

Unfortunately, it seems to be unlikely that one will  be able to prove this conjecture without proving an analog
of Theorem \ref{Thm:3}. This analog should deal with birational classification
of $G$-varieties instead of just the classification of homogeneous spaces. Therefore let us
describe the invariants which (conjecturally) should be used. Below $X$ stands for a normal
irreducible $G$-variety.

For a generalization of the weight lattice $\X_{G,X}$
 we take the set $\K(X)^{(B)}$ of all rational $B$-semiinvariant functions on $X$.
This set is equipped with the multiplication and partially defined
addition, both induced from $\K(X)$.

The other two invariants are now generalized directly. Consider the set $\D_{G,X}$ consisting of all prime Weil
divisors on $X$ that are $B$-stable. Again, there
is a natural map $\D_{G,X}\rightarrow
\Hom_\Z(\K(X)^{(B)\times}/\K^\times,\Z), $ $D\mapsto \varphi_D,
\varphi_D(f):= \ord_D(f)$. Clearly, this set is not a birational invariant:
to get one just consider the set $\D^0_{G,X}=\{D\in \D_{G,X}| G_D\neq G\}$.

Finally, one can consider the set $\overline{\V}_{G,X}$ consisting of all geometric (i.e., coming
from divisors, in an appropriate sense) $\Q$-valued discrete $G$-invariant
valuations of $\K(X)$. This set is equipped with the map
$\overline{\V}_{G,X}\rightarrow
\Hom_\Z(\K(X)^{(B)\times}/\K^\times,\Q)$ given by
$v\mapsto\varphi_v, \varphi_v(f):=v(f)$. This map is known to be
injective, see \cite{Knop4}, so we consider $\V_{G,X}$ as a subset
in $\Hom_\Z(\K(X)^{(B)\times}/\K^\times,\Q)$.

 We call the triple $(\K(X)^{(B)\times}, \D_{G,X}^0, \overline{\V}_{G,X})$
 the Luna-Vust
(shortly, LV) system of $X$
$X$) and denote it by $\LV_{G,X}$. The reason for this is that these invariants
appeared already in \cite{LV}. See \cite{Timashev}, Sections 12-14, 20-21, for more information
about them.

By an
{\it isomorphism} of two LV systems
$(F_1,\V_1,\D_1),(F_2,\V_2,\D_2)$ we mean a pair $(\psi,\iota)$,
where $\psi:F_1\rightarrow F_2,\iota:\D_1\rightarrow \D_2$ are
isomorphisms satisfying the natural compatibility relations. For
example,  $T$  acts by automorphisms of a LV
system $\LV=(F,\V,\D)$ (the action on $\D$ is trivial). By
$\Aut^{ess}(\LV)$ we denote the quotient of the whole group
$\Aut(\LV)$ by the image of $T$. In a sense, this group consists of "essential" automorphisms
of $\LV$, while automorphisms coming from $T$ are considered as "non-essential".  

We would like to  make the following two conjectures.

\begin{Conj}\label{Conj:2}
Let $X_1,X_2$ be two normal irreducible $G$-varieties with isomorphic LV systems.
Then $X_1,X_2$ are birationally isomorphic (as $G$-varieties).
\end{Conj}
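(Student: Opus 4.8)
The plan is to reconstruct the $G$-field $\K(X)$, up to $G$-equivariant isomorphism, from the data $\LV_{G,X}=(\K(X)^{(B)\times},\D^0_{G,X},\overline{\V}_{G,X})$, thereby running a birational analogue of the argument for Theorem~\ref{Thm:3}. The genuinely new feature compared with the homogeneous case is positive complexity: the generic $G$-orbit on $X$ is no longer dense, so $X$ is only a family of homogeneous spaces over its rational quotient. Crucially, this base is not lost in the data, because the partially defined addition on $F=\K(X)^{(B)\times}$ recovers the weight-$0$ subfield $\K(X)^B$ (and, more generally, each weight space as a $\K(X)^B$-module); thus $\LV_{G,X}$ already pins down the rational quotient birationally, together with the weight lattice $\Lambda$. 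As in the homogeneous case the argument is an induction on the semisimple rank of $G$, with two kinds of inductive step: the local structure theorem (Theorem~\ref{Thm:loc_struc}), which has the decisive advantage of being valid for arbitrary, not necessarily spherical, varieties, and a birational analogue of Knop's theory of inclusions, which here has to be built from scratch since the spherical theory is unavailable.

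For the inductive step I would proceed as follows. After replacing $X_1,X_2$ by suitable birational models, choose a $B$-stable divisor and apply Theorem~\ref{Thm:loc_struc} to obtain sections $\Sigma_1,\Sigma_2$ that are $M$-varieties for a proper Levi subgroup $M$, with $X_i^0\cong\Rad_u(P)\times\Sigma_i$. The first technical task is to upgrade the recovery formulas (A)--(D) to the non-spherical setting, i.e.\ to show that $\LV_{M,\Sigma_i}$ is determined by $\LV_{G,X_i}$: the colors should split off the components of the divisor as in (B), the weight data should restrict as in (A), and the invariant valuations should project to $\overline{\V}_{M,\Sigma_i}$. Granting this, an isomorphism $\LV_{G,X_1}\cong\LV_{G,X_2}$ yields $\LV_{M,\Sigma_1}\cong\LV_{M,\Sigma_2}$, and the induction hypothesis produces a birational $M$-isomorphism $\Sigma_1\dashrightarrow\Sigma_2$. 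This is the direct generalisation of Step~3 in the proof of Theorem~\ref{Thm:3}.

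It then remains to lift the birational $M$-isomorphism of sections to a birational $G$-isomorphism of $X_1,X_2$. An isomorphism $\Rad_u(P)\times\Sigma_1\cong\Rad_u(P)\times\Sigma_2$ of $P$-varieties is immediate, but promoting it to a $G$-equivariant birational map is delicate, exactly as in Step~5: the $M$-isomorphism is canonical only up to the action of $N_M(S)$ on sections, and one must show that the relevant element of $N_M(S)$ lifts to $N_G$. This requires a description of $\Aut^G(X)$, and of the essential automorphisms $\Aut^{ess}(\LV_{G,X})$, in the general case, generalising Knop's identification $\Aut^G(X)\cong(\X_{G,X}/\Lambda_{G,X})^{\ast}$; producing the root lattice $\Lambda_{G,X}$ together with a distinguished basis $\overline{\Psi}_{G,X}$ birationally is itself a substantial sub-problem. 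Finally, the rigidity results of Alexeev and Brion, \cite{AB}, should take over the role played by the deformation-and-rigidity argument of Step~4, forcing conjugacy once the sections agree.

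The main obstacle, I expect, is the base of the induction, namely the case in which the generic stabiliser cannot be conjugated into a proper parabolic subgroup. In the homogeneous situation one invokes the Kr\"amer--Brion--Mikityuk classification of spherical reductive subgroups; no such classification exists in the non-spherical, positive-complexity setting, so this case must be handled by genuinely new means, presumably by a direct analysis of $G$-fields whose generic stabiliser is reductive and whose complexity is accounted for by the rational quotient $\K(X)^B$. A secondary but still serious difficulty is that the local structure theorem is now the only general inductive device available: unlike in the spherical case there is no ready-made theory of inclusions to complement it, so the combinatorial bookkeeping of colors and invariant valuations through the reduction has to be carried out with considerably more care.
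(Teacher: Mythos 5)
This statement is Conjecture~\ref{Conj:2} of the paper: it is an \emph{open conjecture}, not a theorem, and the paper offers no proof of it --- only a five-step strategy (Section 5) that the author explicitly describes as rough and as requiring genuinely new ideas (notably at the affine homogeneous base, where the spherical proof leans on the Kr\"amer--Brion--Mikityuk classification). Your proposal is likewise not a proof; it is a research plan in which every substantive step is deferred. Concretely: the upgrade of the recovery formulas (A)--(D) to non-spherical LV systems is assumed (``Granting this''); the birational theory of inclusions is acknowledged to need to be ``built from scratch'' --- this is precisely the paper's Step 3, which nobody has carried out; the lifting of the section isomorphism to a $G$-equivariant birational map requires a general description of $\Aut^{ess}(\LV_{G,X})$, i.e.\ Conjecture~\ref{Conj:3}, which is itself open (and which the paper's strategy, Step 4, also treats as a prerequisite); and the induction base with reductive generic stabilizer is conceded to require ``genuinely new means.'' Each of these four items is exactly where the actual mathematical difficulty lies, so nothing is established beyond what the paper already sketches.

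In terms of comparison: your plan matches the paper's strategy almost step for step (local structure theorem as the inductive engine, an LV-theoretic inclusion theory, the automorphism description feeding the lifting argument, Alexeev--Brion rigidity, and the reductive base case as the principal obstruction). The one genuine divergence is that the paper's Step 1 first reduces the birational problem to homogeneous spaces, whereas you propose to keep positive complexity throughout and track the rational quotient via the weight-zero part of $\K(X)^{(B)\times}$; your observation that the partial addition on $F$ recovers $\K(X)^B$ and the weight-space structure over it is correct and is a nice remark, but it replaces one unsolved reduction with another unsolved one. If you want to contribute here, the right target is not the global induction but one of the named sub-problems --- most plausibly the non-spherical analogues of (A)--(D) for LV systems, which is self-contained and does not presuppose the rest of the program.
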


\begin{Conj}\label{Conj:3}
The group of birational $G$-equivariant automorphisms of $X$ surjects onto
$\Aut^{ess}(\LV)$.
\end{Conj}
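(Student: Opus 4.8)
The plan is to prove Conjecture \ref{Conj:3} in tandem with Conjecture \ref{Conj:2}, exactly as in the spherical setting the description of $\Aut^G(X)$ was intertwined with the rigidity Theorem \ref{Thm:3}. Since all three invariants in $\LV_{G,X}$ are birational, I would first pass to the open $G$-stable locus and construct the natural homomorphism $\Phi$ from the group $\mathrm{Bir}^G(X)$ of birational $G$-equivariant automorphisms into $\Aut(\LV_{G,X})$: a class $\varphi\in\mathrm{Bir}^G(X)$ acts by pullback on $\K(X)^{(B)\times}$ (respecting both the multiplication and the partial addition), permutes the colors $\D^0_{G,X}$ preserving each $\varphi_D$, and preserves $\overline{\V}_{G,X}$ because $G$-invariant geometric valuations are intrinsic. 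I would then check that $\Phi$ descends to a well-defined class in $\Aut^{ess}(\LV_{G,X})$: the ambiguity of rescaling the $B$-semiinvariants $f_\lambda$ is exactly the image of $T$, so that ambiguity is killed upon passing to the essential quotient. This produces the map whose surjectivity is to be established.

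For surjectivity I would first dispose of the spherical case, where $\K(X)^B=\K$, so that the partial addition rigidifies $\K(X)^{(B)\times}$ to the weight lattice $\X_{G,X}$ and the essential automorphisms recover $(\X_{G,X}/\Lambda_{G,X})^*$; here the statement is precisely Knop's description $\Aut^G(X)\cong(\X_{G,X}/\Lambda_{G,X})^*$ recalled above, and the distinguished basis $\overline{\Psi}_{G,X}$ of $\Lambda_{G,X}$ exhibits the automorphisms explicitly. The genuinely new content lies in positive complexity, where $\K(X)^B$ is a nontrivial field and an essential LV-automorphism restricts to a birational automorphism of the rational quotient $X\quo B$. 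I would split such an automorphism into its action on this quotient and its fibrewise scaling part, treating the scaling part by the root-lattice construction $\overline{\Psi}_{G,X}$ along the lines of \cite{Knop8}.

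The decisive step is to realize an arbitrary essential automorphism $a$ geometrically, and here I would use a twisting construction feeding on Conjecture \ref{Conj:2}. From $a$ I would build a normal $G$-variety $X_a$ together with a tautological isomorphism $\LV_{G,X_a}\cong\LV_{G,X}$ inducing $a$; granting Conjecture \ref{Conj:2}, $X_a$ is birationally $G$-isomorphic to $X$, and transporting this isomorphism through the tautological identification yields an element of $\mathrm{Bir}^G(X)$ mapping to $a$. To construct $X_a$ I would apply the local structure theorem (Theorem \ref{Thm:loc_struc}): on the open piece $\Rad_u(P)\times\Sigma$ the automorphism descends to the $M$-slice $\Sigma$, reducing the realization problem to a Levi subgroup of strictly smaller semisimple rank, where induction together with the reductive base case (via the Kr\"amer--Brion--Mikityuk classification used for Theorem \ref{Thm:3}) applies.

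The hard part will be twofold. First, the whole scheme is conditional on Conjecture \ref{Conj:2}, and in the nonspherical range even a serviceable combinatorial model of $X_a$ is not yet available, so the twisting construction must be made unconditional by replacing it with a direct inductive gluing along the local structure theorem; this is the real technical heart. Second, one must control the finite color-permutation part of $\Aut^{ess}$: as flagged after Theorem \ref{Thm:3}, the bijection of colors induced by a birational automorphism is not unique, and showing that every combinatorially admissible permutation of $\D^0_{G,X}$ is actually realized geometrically is exactly the rigidity-of-colors phenomenon, which already in the spherical case required the nontrivial input of \cite{unique}. I expect this color rigidity, rather than the diagonalizable scaling part, to be the main obstacle.
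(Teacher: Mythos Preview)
The statement you are addressing is stated in the paper as an open \emph{conjecture}; the paper does not prove it. What the paper offers is only a ``very rough strategy'' in five steps (reduce to homogeneous spaces; settle the affine homogeneous case; build an inclusion theory for LV systems; prove Conjecture~\ref{Conj:3} by generalizing \cite{unique}, Theorem~2; then prove Conjecture~\ref{Conj:2}), and it explicitly warns that ``performing (at least some of) these steps is not easy at all'' and that the affine base case ``should involve some new ideas''. So there is no proof in the paper against which your attempt can be checked; at best one can compare strategies.

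As a strategy, your outline is coherent in its broad shape (local structure theorem for induction, reductive base case via classification, separate treatment of the scaling part via $\overline{\Psi}_{G,X}$), and you are candid that the twisting construction and the color-rigidity step are genuine gaps. Two points deserve flagging. First, your scheme makes Conjecture~\ref{Conj:3} \emph{conditional on} Conjecture~\ref{Conj:2}, whereas the paper's proposed order is the reverse: prove Conjecture~\ref{Conj:3} first (Step~4), then use it as input to Conjecture~\ref{Conj:2} (Step~5), mirroring how in the spherical case the description of $\Aut^G(X)$ was an ingredient in the proof of Theorem~\ref{Thm:3} rather than a consequence of it. Your twisting argument therefore risks circularity unless you can make the construction of $X_a$ unconditional, which you yourself identify as ``the real technical heart''. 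Second, your claim that in the spherical case ``the statement is precisely Knop's description $\Aut^G(X)\cong(\X_{G,X}/\Lambda_{G,X})^*$'' is not quite right: Knop's result identifies $\Aut^G(X)$ with a character group, but showing that this exhausts $\Aut^{ess}(\LV)$ still requires knowing $\Lambda_{G,X}$ in terms of $\Psi_{G,X}$ and $\D_{G,X}$, which is exactly the content of \cite{unique}, Theorem~2, that the paper alludes to. In short, what you have written is a plausible research plan, not a proof, and the paper itself claims no more.
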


Here is a very rough strategy that one can use to prove these conjectures. This strategy
is just a direct generalization of the one used in \cite{unique}:

{\it Step 1.} Reduce Conjectures \ref{Conj:2},\ref{Conj:3} to the
case of homogeneous spaces (this step does not occur in the
spherical case).

{\it Step 2.} Prove Conjectures \ref{Conj:2}, \ref{Conj:3} for
affine homogeneous spaces.

{\it Step 3.} Develop the theory of inclusions of subgroups of $G$
on the language of LV systems. Such a theory was developed by Knop,
\cite{Knop5} in the spherical case.

{\it Step 4.} Prove Conjecture \ref{Conj:3} generalizing the proof
of \cite{unique}, Theorem 2.

{\it Step 5.} Prove Conjecture \ref{Conj:2} generalizing the proof
of \cite{unique}, Theorem 1.

Of course, performing (at least some of) these steps is not easy at all.
For example, step 2 should involve some new ideas (this step
in the spherical case relies heavily on the classification).

After Conjecture \ref{Conj:2} is proved one should be able to prove Conjecture
\ref{Conj:1} by verifying analogs of claims (*),(**) from Section 2.

{\bf Acknowledgements.} I would like to thank Michel Brion for his remarks on the previous 
version of this text.

\end{document}